\newtheorem{thm}{Theorem}
\newtheorem{lemma}[thm]{Lemma}
\theoremstyle{definition}
\newtheorem{definition}[thm]{Definition}
\begin{document}

\title{On the Jacobson element and generators of
the Lie algebra $\mathfrak{grt}$ in nonzero characteristic}
\author{Maria Podkopaeva}
\date{December 3, 2008}
\maketitle
\begin{abstract}
We state a conjecture (due to M. Duflo) analogous to the
Kashiwara--Vergne conjecture in the case of a characteristic $p>2$,
where the role of the Campbell--Hausdorff series is played by the
Jacobson element. We prove a simpler version of this conjecture
using Vergne's explicit rational solution of the Kashiwara--Vergne
problem. Our result is related to the structure of the
Grothendieck--Teichm\"{u}ller Lie algebra $\mathfrak{grt}$ in
characteristic $p$: we conjecture existence of a generator of
$\mathfrak{grt}$ in degree $p-1$, and we provide this generator for
$p=3$ and $p=5$.
\end{abstract}
\vskip 3em

Let $\mathfrak{lie}_2$ be a free Lie algebra over a field
$\mathbb{K}$ of characteristic zero with generators $x$ and $y$. It
is a graded Lie algebra
$\mathfrak{lie}_2=\prod_{k=1}^{\infty}\mathfrak{lie}_2^k$, where
$\mathfrak{lie}_2^k$ is spanned by the Lie words consisting of $k$
letters. We denote by $z=\mathrm{log}\mathrm e^x\mathrm e^y$ be the
Campbell--Hausdorff series:
\begin{equation}\label{ch}z=\sum_{n=1}^{\infty}z_n=
\sum_{n=1}^{\infty}\sum_{k=1}^{n}\frac{(-1)^{k-1}}{k}\sum_{(i,j)}
\frac{x^{i_1}y^{j_1}...x^{i_k}y^{j_k}} {i_1!j_1!...i_k!j_k!},
\end{equation}
where $i=(i_1,...,i_k)$, $j=(j_1,...,j_k)$, $i_m,j_m\in\mathbb Z$
and $i_m+j_m>0$ for all $m$, and $i_1+...+i_k+j_1+...+j_k=n$.

Let $\mathrm{Assoc}_2$ be the associative algebra with generators
$x$ and $y$, and let
$\tau:\mathfrak{lie}_2\rightarrow\mathrm{Assoc}_2$ be the natural
injection from the Lie algebra to its universal enveloping algebra.
Every element $a$ in $\mathrm{Assoc}_2$ admits a unique presentation
$a=a_0 + a_1 x + a_2 y$, where $a_0 \in \mathbb{Q}$ and $a_1,a_2 \in
\mathrm{Assoc}_2$. We shall denote $a_1 = \partial_x a, a_2=
\partial_y a$.

We define the graded vector space of circular words
$\mathfrak {tr}_2$ as the quotient
$$\mathfrak {tr}_2=\mathrm{Assoc}_2^+/\langle(ab-ba),\ a,b\in \mathrm{Assoc}_2\rangle,$$
where $\mathrm{Assoc}_2^+=\prod_{k=1}^{\infty}\mathrm{Assoc}^k(x,y)$
and $\langle(ab-ba),\ a,b\in \mathrm{Assoc}_2\rangle$ is the
subspace of $\mathrm{Assoc}_2^+$ spanned by commutators. We denote
by $\tilde{\mathrm{tr}}:\mathrm{Assoc}_2\rightarrow\mathfrak{tr}_2$
the corresponding natural projection. Let $\mathfrak{g}$ be a Lie
algebra over $\mathbb{K}$, and let $\rho: \mathfrak{g} \rightarrow
{\rm End}(V)$ be a finite dimensional representation. Then, each
element $\tilde{\mathrm{tr}}(a) \in \mathfrak{tr}_2$ gives rise to a
map $\rho_a: \mathfrak{g}\times \mathfrak{g} \rightarrow \mathbb{K}$
defined by the formula $\rho_a(x,y)={\rm Tr}(\rho(a(x,y))$.

The Kashiwara--Vergne conjecture [8] (now a theorem [1]) is an
important problem of Lie theory which in particular implies the
Duflo isomorphism [4] between the center of the universal enveloping
algebra  and the ring of invariant polynomials. The conjecture
states that there exist elements $F(x,y)$ and $G(x,y)$ in
$\mathfrak{lie}_2$ such that
\begin{equation}\label{KV1}
x+y-\mathrm {log}\mathrm e^x\mathrm e^y=(\mathrm e^{\mathrm{ad}
x}-1)F(x,y)+(1-\mathrm e^{-\mathrm{ad} y})G(x,y)
\end{equation}
and
\begin{equation}\label{KV2}
\tilde{\mathrm{tr}}(x\partial_xF+
y\partial_yG)=\frac{1}{2}\tilde{\mathrm{tr}}\left(\frac{x}{\mathrm
e^{x}-1}+ \frac{y}{\mathrm e^{y}-1}-\frac{z}{\mathrm
e^{z}-1}-1\right).
\end{equation}

Since the statement of the Kashiwara--Vergne conjecture uses the
exponential function, it can only be defined over a field of
characteristic zero. Michel Duflo [5] suggested the following
question which resembles the Kashiwara--Vergne conjecture in the
case of a positive characteristic. Let $p>2$ be a prime, and let
$\mathbb{K}$ be a field of characteristic $p$.

\vskip 1em \noindent\textbf{Conjecture 1} \, \textit{There exist
$A(x,y)$ and $B(x,y)$ in $\mathfrak {lie}_2$ over $\mathbb{K}$  such
that
\begin{equation}\label{partial KV1}
[x,A(x,y)]+[y,B(x,y)]=x^p+y^p-(x+y)^p
\end{equation}
and
\begin{equation}\label{partial KV2}
\tilde{\mathrm{tr}}(x\partial_x A+y\partial_y
B)=\frac{1}{2}\tilde{\mathrm{tr}}(x^{p-1}+y^{p-1}-(x+y)^{p-1}).
\end{equation}
}

Note that $x^p+y^p-(x+y)^p$ is the Jacobson element (see {\em e.g.}
[9]) in $\mathfrak{lie}_2$ over $\mathbb{K}$. We will prove a
simplified version of Conjecture 1. For an arbitrary element
$a=x_{i_1}...x_{i_n}\in\mathrm{Assoc}_2$, we put
$a^T=(-x_{i_n})...(-x_{i_1})$. Consider the quotient of
$\mathfrak{tr}_2$ by the relations
$\tilde{\mathrm{tr}}(a)=\tilde{\mathrm{tr}}(a^T)$. We denote by
$\mathrm{tr}$ the projection from $\mathrm{Assoc}_2$ to the above
quotient. Let $\mathfrak{g}$ be a Lie algebra over $\mathbb{K}$ and
let $\rho$ be a finite dimensional representation of $\mathfrak{g}$
with the property $\rho(x)^t=-\rho(x)$ (here $\rho(x)^t$ stands for
a transposed matrix). Then, the map $\rho_a$ only depends on
$\mathrm{tr}(a)$. For instance, that is the case of the adjoint
representation of the quadratic Lie algebra (a Lie algebra equipped
with a non-degenerate invariant symmetric bilinear form). Hence, we
refer to the "quadratic" case of  Conjecture 1.

\begin{proof} \emph{(in the quadratic case)}\ \ We use the following simple
facts from number theory.
\begin{lemma}\emph{(Wilson's Theorem)}\label{Wilson}
$(p-1)!=-1\mod p.$
\end{lemma}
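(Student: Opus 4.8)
The plan is to exploit the group structure of the units $(\mathbb{Z}/p\mathbb{Z})^{*}=\{1,2,\dots,p-1\}$, which form an abelian group of order $p-1$ under multiplication modulo $p$. The idea is to evaluate the product $(p-1)!=\prod_{a=1}^{p-1}a$ inside this group by pairing each factor with its multiplicative inverse, so that almost every factor cancels against its partner.

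First I would note that, because $p$ is prime, every $a\in\{1,\dots,p-1\}$ has a unique inverse $a^{-1}$ lying in the same set. Next I would single out the self-paired elements: $a\equiv a^{-1}$ means $a^{2}\equiv 1\pmod p$, i.e. $(a-1)(a+1)\equiv 0\pmod p$. Since $\mathbb{Z}/p\mathbb{Z}$ is a field (this is exactly where primality of $p$ is used), this forces $a\equiv 1$ or $a\equiv -1\equiv p-1$. Hence precisely two of the factors, namely $1$ and $p-1$, are their own inverses, while the remaining $p-3$ factors break up into $(p-3)/2$ disjoint pairs $\{a,a^{-1}\}$ with $a\neq a^{-1}$ (the count is an integer because $p>2$ is odd).

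Then I would regroup the product along this pairing. Each such pair contributes $a\cdot a^{-1}\equiv 1\pmod p$, so the entire product collapses to the two leftover self-inverse factors,
$$(p-1)!\equiv 1\cdot(p-1)\equiv -1\pmod p,$$
which is the assertion of the lemma.

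The only genuine obstacle is the step identifying the self-inverse elements, since it is here that primality enters through the absence of zero divisors in $\mathbb{Z}/p\mathbb{Z}$; without it the factorization $(a-1)(a+1)\equiv 0$ would not force $a\equiv\pm 1$. The remaining work — verifying that the pairing is well defined, involutive, and uses no factor twice — is purely routine bookkeeping.
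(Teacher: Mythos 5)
Your proof is correct. Note that the paper does not actually prove this lemma: it states it together with Lemma \ref{Staudt} and defers both proofs to the reference [7] (Ireland--Rosen), so your argument is a self-contained substitute rather than a variant of anything in the text. What you give is the classical inverse-pairing proof: every factor of $(p-1)!$ is a unit modulo $p$, the self-inverse factors are exactly the roots of $a^2\equiv 1\pmod p$, which by the absence of zero divisors in the field $\mathbb{Z}/p\mathbb{Z}$ are only $a\equiv\pm 1$, and all other factors cancel in pairs, leaving $(p-1)!\equiv 1\cdot(-1)\equiv -1\pmod p$. You correctly isolate the one nontrivial point (primality entering through the zero-divisor argument) and you correctly use $p>2$ to ensure that $1$ and $p-1$ are distinct and that the remaining $p-3$ elements split into whole pairs; since the paper assumes $p>2$ throughout, the degenerate case $p=2$ never arises. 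For comparison, another standard route (the one often taken in textbooks such as [7]) factors $X^{p-1}-1\equiv\prod_{a=1}^{p-1}(X-a)\pmod p$ using that $\mathbb{Z}/p\mathbb{Z}$ is a field in which every nonzero element is a root of $X^{p-1}-1$, and reads off Wilson's theorem from the constant term; that argument generalizes more readily (e.g.\ to sums of powers of residues), whereas yours is more elementary and requires nothing beyond the group structure of the units.
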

Let $B_m$ be the $m$-th Bernoulli number.
\begin{lemma}\label{Staudt}Let $p$ be a prime and $m$
be an even number. If $(p-1)\nmid m$, then $B_m$ is a $p$-integer.
If $(p-1)|m$, then $pB_m$ is a $p$-integer and $pB_m=-1\mod
p$.
\end{lemma}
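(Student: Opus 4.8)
The plan is to prove this von Staudt--Clausen congruence directly, by comparing $pB_m$ modulo $p$ with the power sum $S_m=\sum_{a=1}^{p-1}a^m$ through Faulhaber's summation formula, controlling denominators by a strong induction on $m$. Throughout I write $v_p$ for the $p$-adic valuation normalized by $v_p(p)=1$, so that ``$a$ is a $p$-integer'' means $v_p(a)\ge 0$, and I compute in $\mathbb{Z}_p$; I take $p$ odd, as in the paper's setting $p>2$ (for $p=2$ the statement is checked directly). First I would evaluate $S_m$ modulo $p$. By Fermat's little theorem $a^{p-1}\equiv 1$ for $1\le a\le p-1$, so if $(p-1)\mid m$ then every $a^m\equiv 1$ and $S_m\equiv p-1\equiv -1$; if $(p-1)\nmid m$ I fix a primitive root $g$ modulo $p$ and sum a geometric series,
\[ S_m\equiv\sum_{t=0}^{p-2}(g^m)^t=\frac{(g^{p-1})^m-1}{g^m-1}\equiv 0 \pmod p, \]
the numerator vanishing while $g^m\not\equiv 1$ because $g$ has order $p-1$.

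Next I would apply Faulhaber's formula at $n=p$ (with the convention $B_1=-\tfrac12$), namely
\[ S_m=\sum_{a=0}^{p-1}a^m=\frac{1}{m+1}\sum_{k=0}^{m}\binom{m+1}{k}B_k\,p^{\,m+1-k}, \]
and single out the top term $k=m$, which is exactly $pB_m$. Rewriting $\frac{1}{m+1}\binom{m+1}{k}=\frac{1}{m+1-k}\binom{m}{k}$ for $k<m$, I obtain
\[ pB_m=S_m-\sum_{k=0}^{m-1}E_k,\qquad E_k=\frac{1}{m+1-k}\binom{m}{k}B_k\,p^{\,m+1-k}. \]

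The heart of the proof is to show that each error term satisfies $E_k\in p\mathbb{Z}_p$. I would argue by strong induction, the hypothesis being that $v_p(B_k)\ge -1$ for all $k<m$; the base cases $B_0=1$ and $B_1=-\tfrac12$ are clear since $p$ is odd. As $\binom{m}{k}\in\mathbb{Z}$, writing $r=m+1-k\ge 2$ gives $v_p(E_k)\ge -1+\bigl(r-v_p(r)\bigr)$. The number-theoretic crux is then the elementary inequality $r-v_p(r)\ge 2$, valid for every integer $r\ge 2$ and every odd prime $p$: if $v_p(r)=s$ then $r\ge p^s\ge 3^s\ge s+2$, so $r-s\ge 2$. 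This forces $v_p(E_k)\ge 1$, i.e.\ $E_k\in p\mathbb{Z}_p$. I expect this to be the main obstacle, since it is precisely the point where the single factor of $p$ extracted from $p^{r}/r$ must dominate the possible simple pole of $B_k$ with one unit of valuation to spare; this is also exactly where the hypothesis $p>2$ is used.

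Finally, feeding Step 1 into the identity of Step 2 gives $pB_m\equiv S_m\pmod p$. If $(p-1)\nmid m$ then $pB_m\equiv 0$, so $v_p(B_m)\ge 0$ and $B_m$ is a $p$-integer; if $(p-1)\mid m$ then $pB_m\equiv -1$, which at once shows that $pB_m$ is a $p$-integer, that $pB_m\equiv -1\pmod p$, and that $v_p(B_m)=-1$. In both cases $v_p(B_m)\ge -1$, so the induction hypothesis is reinstated and the induction closes, proving the lemma. I note that Wilson's Theorem (Lemma \ref{Wilson}) enters naturally in an alternative route through the Stirling-number expansion $B_m=\sum_{k=0}^{m}(-1)^k\frac{k!}{k+1}S(m,k)$, where (for $m\ge p-1$) the only term with a pole at $p$ is $k=p-1$, whose residue $\frac{(p-1)!}{p}$ is governed by $(p-1)!\equiv -1$; the power-sum argument above is, however, self-contained.
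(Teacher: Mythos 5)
Your proof is correct, and there is in fact nothing internal to compare it with: the paper does not prove this lemma at all, but states it and defers to the number-theory text [7] (Ireland--Rosen). Your argument is essentially the classical von Staudt--Clausen argument found in that reference: evaluate $S_m=\sum_{a=1}^{p-1}a^m\equiv -1$ or $0 \pmod p$ according to whether $(p-1)\mid m$ (Fermat plus a primitive root), then compare $S_m$ with $pB_m$ via Faulhaber's identity at $n=p$, pushing every lower term $E_k=\frac{1}{m+1-k}\binom{m}{k}B_k\,p^{m+1-k}$ into $p\mathbb{Z}_p$ by strong induction. You have correctly isolated the crux, the inequality $r-v_p(r)\ge 2$ for $r\ge 2$, and correctly flagged it as the precise point where the hypothesis $p>2$ (the paper's standing assumption) enters, since it fails for $p=2$ at $r=2$. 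Two micro-points to tighten: (i) your induction hypothesis quantifies over all $k<m$, but the lemma as stated only returns information at even indices, so either invoke $B_k=0$ for odd $k\ge 3$ or, more cleanly, observe that your identity $pB_m=S_m-\sum_{k<m}E_k$ and the bound on $E_k$ hold for every $m\ge 2$ regardless of parity, so the induction may be run over all integers $m$ with conclusion $v_p(B_m)\ge -1$, the congruences being read off at even $m$; (ii) in the geometric-series step it is safer to write $(g^m-1)S_m\equiv g^{m(p-1)}-1\equiv 0\pmod p$ and divide by the unit $g^m-1$, rather than displaying a fraction modulo $p$. What your self-contained route buys over the paper's bare citation is exactly the visibility of these $p$-adic valuation estimates, which is the same kind of $1/p$-bookkeeping the paper later performs when expanding $F_{p-1}$ and equation (\ref{KV2}); your closing remark that Wilson's theorem (Lemma \ref{Wilson}) governs the residue $\frac{(p-1)!}{p}$ in the Stirling-number route is a pleasant aside but not needed for the argument.
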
 See [7] for the proofs of these lemmas.

In [10] Vergne gave the following explicit solution of the
Kashiwara--Vergne problem (in the quadratic case). Consider the
functions
$$\Theta(t)=\frac{1-\mathrm e^{-t}}{t},\ \ \ \ R(t)=\frac{\mathrm e^t-\mathrm e^{-t}-2t}{t^2}.$$
Let $\mathcal R$ be the derivation of $\mathfrak {lie}_2$ such that
$\mathcal R|_{\mathfrak
{lie}_2^n}=n\mathrm{Id}|_{\mathfrak{lie}_2^n}$. The solutions of the
Kashiwara--Vergne problem are given by
$$F(x,y)=-\Theta(-\mathrm{ad} x)^{-1}U(x,y),\ \ G(x,y)=-\Theta(-\mathrm{ad} y)^{-1}V(x,y),$$
where $U$ and $V$ are defined by the equations
\begin{equation}(\mathcal
R+1)U(x,y)=-\frac{1}{2}\Theta(\mathrm{ad} x)\Theta(\mathrm{ad}
z)^{-1}R(\mathrm{ad} z) (\Theta(-\mathrm{ad}
z)^{-1}x+\Theta(\mathrm{ad} z)^{-1}y)+\frac{1}{2}\Theta(-\mathrm{ad}
x)y,\end{equation}
\begin{equation}(\mathcal
R+1)V(x,y)=-\frac{1}{2}\Theta(-\mathrm{ad} y)\Theta(-\mathrm{ad}
z)^{-1}R(\mathrm{ad} z) (\Theta(-\mathrm{ad}
z)^{-1}x+\Theta(\mathrm{ad} z)^{-1}y)-\frac{1}{2}\Theta(\mathrm{ad}
y)x.\end{equation} Let $p$ be a prime. It is easy to show that the
lowest homogeneous degree in $x$ and $y$ of a term of $F$ with
non-$p$-integer coefficient is $p-1$. We expand the $(p-1)$-st
homogeneous component $F_{p-1}$ of $F$ in powers of $p$:
$F_{p-1}=\sum_{n=-\infty}^{\infty}f_np^n$. It is easy to see that
the lowest power of $p$ in this expansion is $-2$: the $\frac{1}{p}$
coming from $R(\mathrm{ad} z)=2(\frac{\mathrm{ad}
z}{3!}+\frac{(\mathrm{ad} z)^3}{5!}+\frac{(\mathrm{ad}
z)^5}{7!}+...)$ is multiplied by the $\frac{1}{p}$ coming from the
inverse of $\mathcal R+1$ . However, the following computation shows
that the coefficient $f_{-2}$ is actually equal to zero. By the
definition of the Campbell--Hausdorff series ($\ref{ch}$) we see
that $z_1=x+y$. We have
$$\frac{f_{-2}}{p^2}=-\frac{1}{2}\cdot\frac{1}{p}\cdot\frac{(\mathrm{ad}
z_1)^{p-2}}{p!}(x+y),$$ and so
$$f_{-2}=-\frac{1}{2}\cdot\frac{(\mathrm{ad}
z_1)^{p-2}}{(p-1)!}(x+y)=-\frac{1}{2}\cdot\frac{(\mathrm{ad}
(x+y))^{p-2}}{(p-1)!}(x+y)=0.$$ Thus, the $p$-adic expansion of
$F_{p-1}$ has the form
$F_{p-1}=\frac{f_{-1}}{p}+\sum_{n=0}^{\infty}f_np^n$. The same
calculation for $G$ gives
$G_{p-1}=\frac{g_{-1}}{p}+\sum_{n=0}^{\infty}g_np^n$. Consider the
$p$-th homogeneous part of equation $(\ref{KV1})$. The left-hand
side yields
$$\frac{\mathrm{ad} x}{1!}F_{p-1}+\frac{(\mathrm{ad} x)^2}{2!}F_{p-2}+...+
\frac{(\mathrm{ad} x)^{p-1}}{(p-1)!}F_1+
\frac{-\mathrm{ad} y}{1!}G_{p-1}+\frac{(-\mathrm{ad}
y)^2}{2!}G_{p-2}+...+\frac{(-\mathrm{ad} y)^{p-1}}{(p-1)!}G_1,$$ and the
right-hand side is of the form
$$-\sum_{k=1}^{p}\frac{(-1)^{k-1}}{k}\sum_{(i,j)}\frac{x^{i_1}y^{j_1}...x^{i_k}y^{j_k}}
{i_1!j_1!...i_k!j_k!},$$ where $i=(i_1,...,i_k)$, $j=(j_1,...,j_k)$,
$i_m,j_m\in\mathbb Z$ and $i_m+j_m>0$ for all $m$, and
$i_1+...+i_k+j_1+...+j_k=p$. Expanding the above expressions in
powers of $p$ and comparing the coefficients at $\frac{1}{p}$, we
have
$$\mathrm{ad} x\cdot f_{-1}+\mathrm{ad} y\cdot g_{-1}=-\frac{x^p+y^p}{(p-1)!}-(x+y)^p\mod p.$$
Using Lemma $(\ref{Wilson})$, we obtain
$$\mathrm{ad} x\cdot f_{-1}+\mathrm{ad} y\cdot g_{-1}=x^p+y^p-(x+y)^p\mod p.$$

Next, consider equation
$(\ref{KV2})$. It is easy to see that the lowest
homogeneous degree in $x$ and $y$ with non-$p$-integer coefficient
is $p-1$. Consider the $(p-1)$-st homogeneous part of the equation
and expand it in powers of $p$. Comparing the coefficients at
$\frac{1}{p}$, we obtain
$$\mathrm{tr}(x\partial_x f_{-1}+y\partial_y
g_{-1})=\frac{pB_{p-1}}{2(p-1)!}(x^{p-1}+y^{p-1}-(x+y)^{p-1})\mod
p.$$ Using Lemma $(\ref{Staudt})$ we obtain
$$\mathrm{tr}(x\partial_x f_{-1}+ y\partial_y
g_{-1})=\frac{1}{2}(x^{p-1}+y^{p-1}-(x+y)^{p-1})\mod p.$$ We put
$A(x,y)=f_{-1}(x,y)$ and $B(x,y)=g_{-1}(x,y)$.
\end{proof}

{\em Remark.} In order to use a similar strategy for proving
Conjecture 1 in the general case we need a control of the $1/p$
behavior of coefficients of a rational solution of the
Kashiwara--Vergne conjecture. The solution of [1] uses Kontsevich
integrals over configuration spaces, and {\em a priori} it is
defined over $\mathbb{R}$. The existence of rational solution
follows by linearity, but there is no control over coefficients.

\vskip 1.5em In [2], the Kashiwara--Vergne problem was related to
the theory of Drinfeld's associators. By analogy, this relation
suggests a link between Conjecture 1 and the structure of the
Grothendieck--Teichm\"uller Lie algebra over a field of
characteristic $p$.

\begin{definition}The algebra $\mathfrak t_n$ is the quotient of the free Lie algebra with
$n(n-1)/2$ generators $t^{i,j}=t^{j,i}$ by the following relations
\begin{equation}\label{t4-1}[t^{i,j},t^{k,l}]=0
\end{equation}
if all indices $i,j,k$, and $l$ are distinct, and
\begin{equation}\label{t4-2}[t^{i,j}+t^{i,k},t^{j,k}]=0
\end{equation}
for all triples of distinct indices $i,j$, and $k$.
\end{definition}
Below we will use the following statement (see [3]).
\begin{lemma}\label{t4}$\mathfrak t_4\cong\mathbb Kt^{1,2}\oplus
\mathfrak{lie}(t^{1,3},t^{2,3})\oplus\mathfrak{lie}(t^{1,4},t^{2,4},t^{3,4})$,
where $\mathfrak{lie}(t^{1,4},t^{2,4},t^{3,4})$ is an ideal acted by
$\mathbb Kt^{1,2}\oplus \mathfrak{lie}(t^{1,3},t^{2,3})$, and
$\mathfrak{lie}(t^{1,3},t^{2,3})$ is an ideal in
$\mathbb Kt^{1,2}\oplus \mathfrak{lie}(t^{1,3},t^{2,3})$ acted
by $\mathbb Kt^{1,2}$.

\end{lemma}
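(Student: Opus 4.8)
The plan is to realise $\mathfrak{t}_4$ as an iterated semidirect product by repeatedly ``forgetting the last index'', so that the two extensions $\mathfrak{t}_4\to\mathfrak{t}_3$ and $\mathfrak{t}_3\to\mathfrak{t}_2=\mathbb{K}t^{1,2}$ produce exactly the claimed decomposition. Concretely, I would prove the single extension statement
$$\mathfrak{t}_n\cong\mathfrak{t}_{n-1}\ltimes\mathfrak{lie}(t^{1,n},\dots,t^{n-1,n})$$
for $n=4$ and $n=3$ (with $\mathfrak{t}_2=\mathbb{K}t^{1,2}$), the Lie algebra on the right being free, and then substitute one splitting into the other. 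This is precisely what encodes the two nested ideals in the statement: $\mathfrak{lie}(t^{1,4},t^{2,4},t^{3,4})$ is the kernel of $\mathfrak{t}_4\to\mathfrak{t}_3$, and $\mathfrak{lie}(t^{1,3},t^{2,3})$ is the kernel of $\mathfrak{t}_3\to\mathbb{K}t^{1,2}$.

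First I would define the projection $\pi:\mathfrak{t}_4\to\mathfrak{t}_3$ on generators by $\pi(t^{i,j})=t^{i,j}$ for $j\le 3$ and $\pi(t^{i,4})=0$, and check that it respects both families of relations $(\ref{t4-1})$ and $(\ref{t4-2})$: every relation involving the index $4$ maps to a trivially valid identity because one of its entries vanishes, while the relations among $\{1,2,3\}$ are preserved verbatim. The obvious inclusion $\iota:\mathfrak{t}_3\to\mathfrak{t}_4$ on generators is a section of $\pi$, so $\mathfrak{t}_4\cong\mathfrak{t}_3\ltimes\mathfrak{k}$, where $\mathfrak{k}=\ker\pi$ is the ideal generated by $t^{1,4},t^{2,4},t^{3,4}$. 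It then remains to identify $\mathfrak{k}$ with the \emph{free} Lie algebra on these three generators.

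The crux is this freeness, which I would establish by exhibiting an inverse to the natural surjection $\mathfrak{lie}(a_1,a_2,a_3)\twoheadrightarrow\mathfrak{k}$, $a_i\mapsto t^{i,4}$. Let $L=\mathfrak{lie}(a_1,a_2,a_3)$ be genuinely free, and for $1\le i<j\le 3$ let $D_{ij}$ be the unique derivation of $L$ with $D_{ij}(a_i)=[a_i,a_j]$, $D_{ij}(a_j)=[a_j,a_i]$, and $D_{ij}(a_k)=0$ for $k\notin\{i,j\}$; these formulas are forced by rewriting $(\ref{t4-1})$ and $(\ref{t4-2})$ with last index $4$, upon identifying $a_k$ with $t^{k,4}$. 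The key verification — and the main obstacle — is that the assignment $t^{i,j}\mapsto D_{ij}$ satisfies the relations of $\mathfrak{t}_3$, that is, the derivations $[D_{ij},D_{kl}]$ and $[D_{ij}+D_{ik},D_{jk}]$ vanish. Since a derivation of a free Lie algebra is determined by its values on $a_1,a_2,a_3$, this reduces to a finite bracket computation in low degree.

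Granting this, $\mathfrak{t}_3$ acts on $L$ by derivations, the semidirect product $\mathfrak{t}_3\ltimes L$ is well defined, and the homomorphism $\mathfrak{t}_3\ltimes L\to\mathfrak{t}_4$ sending $a_i\mapsto t^{i,4}$ together with $\pi$ are mutually inverse on generators; hence $\mathfrak{k}\cong L$ is free. Running the identical argument for $\mathfrak{t}_3=\mathbb{K}t^{1,2}\ltimes\mathfrak{lie}(t^{1,3},t^{2,3})$ — where the single derivation $D_{12}$ automatically yields an action, there being no relations in $\mathfrak{t}_2$ to check — and combining the two splittings gives the asserted vector-space decomposition together with its nested ideal structure.
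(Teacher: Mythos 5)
Your proposal is correct, but there is nothing in the paper to compare it against: the paper states this lemma without proof, citing Drinfeld [3], so what you have written supplies an argument the paper omits. Your argument is the standard structural proof of this classical fact, and it is sound; moreover it is characteristic-free, which matters here since the paper invokes the lemma over a field of characteristic $p$. The step you flag as ``the main obstacle'' --- that $t^{i,j}\mapsto D_{ij}$ respects the relations of $\mathfrak{t}_3$ --- does go through, and can be dispatched without any long bracket computation: the four-index relations are vacuous for the index set $\{1,2,3\}$, and for the three-index relation one observes that $D_{12}+D_{13}+D_{23}=-\mathrm{ad}(a_1+a_2+a_3)$ (both sides are derivations of $L$ agreeing on generators), while $[D,\mathrm{ad}(u)]=\mathrm{ad}(Du)$ for any derivation $D$ of a Lie algebra; hence $[D_{12}+D_{13},D_{23}]=[-\mathrm{ad}(a_1+a_2+a_3)-D_{23},\,D_{23}]=\mathrm{ad}\bigl(D_{23}(a_1+a_2+a_3)\bigr)=\mathrm{ad}\bigl([a_2,a_3]+[a_3,a_2]\bigr)=0$, and the other two instances follow by the same token. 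Two small points to tighten: first, the inverse of your homomorphism $\Psi:\mathfrak{t}_3\ltimes L\to\mathfrak{t}_4$ is not $\pi$ itself (which lands in $\mathfrak{t}_3$, not in the semidirect product), but the map $\Phi:\mathfrak{t}_4\to\mathfrak{t}_3\ltimes L$ defined on generators by $t^{i,j}\mapsto t^{i,j}$ for $j\le 3$ and $t^{i,4}\mapsto a_i$, whose compatibility with all the relations of $\mathfrak{t}_4$ is an easy check you should state explicitly (it is exactly here that the formulas defining $D_{ij}$ are used); second, when you say the formulas for $D_{ij}$ are ``forced'' by the relations, what you actually need is only that these formulas define an action making $\Psi$ and $\Phi$ well defined --- the forcing is motivation, not a step of the proof.
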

\begin{definition} The Grothendieck--Teichm\"{u}ller Lie algebra is the Lie
algebra spanned by the elements $\psi\in\mathfrak{lie}_2$ satisfying
the following relations:
\begin{equation}\label{grt1}
\psi(x,y)=-\psi(y,x),
\end{equation}
\begin{equation}\label{grt2}
\psi(x,y)+\psi(y,z)+\psi(z,x)=0,
\end{equation}
where $z=-x-y$,
\begin{equation}\label{grt3}
\psi(t^{1,2},t^{2,34})+\psi(t^{12,3},t^{3,4})=\psi(t^{2,3},t^{3,4})+
\psi(t^{1,23},t^{23,4})+\psi(t^{1,2},t^{2,3}),
\end{equation}
where the latter takes place in $\mathfrak t_4$ and
$t^{i,jk}=t^{i,j}+t^{i,k}$.
\end{definition}
The Deligne--Drinfeld conjecture [3] states that, over a field of
characteristic zero, $\mathfrak{grt}$ is a graded free Lie algebra
with generators $\sigma_{2n-1}, n=1,2,\dots$ of degree ${\rm
deg}(\sigma_{2n-1})=2n-1$ . This conjecture is numerically verified
up to degree 16. Consider the algebra $\mathfrak{grt}$ over a field
of characteristic $p>2$. Conjecture 2 (see below) suggests existence
of a generator of $\mathfrak{grt}$ in the degree $p-1$.

Consider the function $\psi(x,y)$ such that $\psi(-x-y,x)=A(x,y)$
and $\psi(-x-y,y)=B(x,y)$, where $A$ and $B$ are solutions of
(\ref{partial KV1},\ref{partial KV2}). Such a function exists
because the solutions of (\ref{partial KV1},\ref{partial KV2}) can
always be chosen symmetric: $A(x,y)=B(y,x)$.

We define another grading on $\mathfrak{lie}_2$. The depth of a Lie
monomial is defined as the number of $y$'s entering this monomial.
The depth of a Lie polynomial is the smallest depth of its
monomials.

\begin{lemma}\label{depth} The polynomial $\psi(x,y)$ is of depth one.
\end{lemma}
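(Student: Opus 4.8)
The plan is to make the change of variables defining $\psi$ explicit, note that the depth-zero part of $\psi$ is forced to vanish, and then show that its depth-one part is a \emph{nonzero} multiple of $(\mathrm{ad}\,x)^{p-2}(y)$, with the relevant coefficient pinned down by the two defining equations. Solving $\psi(-x-y,x)=A(x,y)$ gives $\psi(s_1,s_2)=A(s_2,-s_1-s_2)$, and the symmetry $A(x,y)=B(y,x)$ rewrites this as $\psi(s_1,s_2)=B(-s_1-s_2,s_2)$; in particular $\psi(x,y)=B(-x-y,y)$. Since $A,B$ are homogeneous of degree $p-1$, so is $\psi$, and its depth-zero component (a pure power of $x$ of degree $p-1\ge 2$) is automatically $0$; hence $\mathrm{depth}(\psi)\ge 1$ and everything reduces to the nonvanishing of the depth-one component.

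Grading $B(-x-y,y)$ by the number of $y$'s and keeping the lowest piece, the only contribution to depth one comes from the one-$y$ part $B^{(1)}$ of $B$ with every first-slot occurrence replaced by $-x$. As the depth-one degree-$(p-1)$ subspace of $\mathfrak{lie}_2$ is one-dimensional, spanned by $(\mathrm{ad}\,x)^{p-2}(y)$, I may write $B^{(1)}=\delta\,(\mathrm{ad}\,x)^{p-2}(y)$, and the substitution $x\mapsto -x$ yields the depth-one part of $\psi$ equal to $-\delta\,(\mathrm{ad}\,x)^{p-2}(y)$. So it suffices to show $\delta\neq 0$. It is worth stressing that this is the \emph{crossed} coefficient of $B$ (equivalently, by symmetry, the one-$x$ part of $A$); the depth-one slice of (\ref{partial KV1}) gives only the \emph{matched} coefficients, and so cannot see $\delta$ directly.

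What (\ref{partial KV1}) does give for free is $A^{(1)}$. Comparing the depth-one (in $y$) parts of (\ref{partial KV1}) and using $B^{(0)}=0$ yields $[x,A^{(1)}]=$ (one-$y$ part of the Jacobson element). The latter lies in the one-dimensional depth-one degree-$p$ space, so equals $c\,(\mathrm{ad}\,x)^{p-1}(y)$; matching the coefficient of the word $x^{p-1}y$ (coming only from $-(x+y)^p$) gives $c=-1$. Hence $[x,A^{(1)}]=-(\mathrm{ad}\,x)^{p-1}(y)$, which forces $A^{(1)}=-(\mathrm{ad}\,x)^{p-2}(y)$. I would record this, and then turn to the trace equation for $\delta$.

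The main work, and the expected obstacle, is extracting $\delta$ from (\ref{partial KV2}). Using $\tilde{\mathrm{tr}}(x\partial_xA)=\tilde{\mathrm{tr}}(A_{\mathrm{end}\,x})$ and $\tilde{\mathrm{tr}}(y\partial_yB)=\tilde{\mathrm{tr}}(B_{\mathrm{end}\,y})$ (words ending in $x$, resp. $y$), I isolate the coefficient of the cyclic word $x^{p-2}y$ on both sides. On the left, since $A^{(1)}$ and $B^{(1)}$ are commutators their total traces vanish; splitting by the last letter, the term $A^{(1)}=-(\mathrm{ad}\,x)^{p-2}(y)$ contributes $+\tilde{\mathrm{tr}}(x^{p-2}y)$ and $B^{(1)}$ contributes $\delta\,\tilde{\mathrm{tr}}(x^{p-2}y)$, for a total $(1+\delta)\,\tilde{\mathrm{tr}}(x^{p-2}y)$. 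On the right, only $-(x+y)^{p-1}$ produces one-$y$ words; there are $p-1\equiv-1\pmod p$ of them, all cyclically equal to $x^{p-2}y$, giving $\tfrac12\,\tilde{\mathrm{tr}}(x^{p-2}y)$. Since $\tilde{\mathrm{tr}}(x^{p-2}y)\neq 0$ in $\mathfrak{tr}_2$, I conclude $1+\delta=\tfrac12$, i.e. $\delta=-\tfrac12\neq 0$ in characteristic $p>2$. Therefore the depth-one component of $\psi$ is $\tfrac12\,(\mathrm{ad}\,x)^{p-2}(y)\neq 0$, so $\psi$ has depth exactly one. I anticipate the delicate points to be the careful bookkeeping of single-$y$ cyclic words (where the congruence $p-1\equiv-1$, in the spirit of the Wilson and von Staudt lemmas above, does the real work) and the observation that it is precisely the crossed coefficient, invisible to (\ref{partial KV1}), that governs the depth.
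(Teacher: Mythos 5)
Your proof is correct, but it takes a genuinely different route from the paper's. The paper never touches the trace condition (\ref{partial KV2}): writing $\psi(x,y)=A(y,-x-y)$, it reduces the lemma to the nonvanishing of the coefficient $c$ in the $x$-linear part $A_{xy^{p-2}}=c\,\mathrm{ad}_y^{p-2}x$ (this $c$ is exactly your $\delta$, via the symmetry $A(x,y)=B(y,x)$), and proves $c\neq 0$ by contradiction using only the bidegree-$(2,p-2)$ slice of (\ref{partial KV1}): if $c=0$, that slice would equal $[y,\,(\text{$x$-degree-two part of }B)]$, whose image under $\tau$ consists of words beginning or ending in $y$, whereas the Jacobson element contributes the word $xy^{p-2}x$. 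So your parenthetical remark is only half the story: the depth-one slice of (\ref{partial KV1}) indeed cannot see $\delta$, but a different slice of the same equation can, and that is what the paper exploits. Your route instead pins $\delta$ down exactly: first $A^{(1)}=-\mathrm{ad}_x^{p-2}y$ from the depth-one slice of (\ref{partial KV1}), then $1+\delta=\tfrac{1}{2}$ from the single-$y$ cyclic word in (\ref{partial KV2}); all of that bookkeeping checks out, and it agrees with the $p=3$ data ($B=[x,y]$, so $\delta=1\equiv-\tfrac{1}{2}\bmod 3$). What each approach buys: yours yields the stronger quantitative conclusion that the depth-one part of $\psi$ is $\tfrac{1}{2}\,\mathrm{ad}_x^{p-2}y$, but at the price of invoking (\ref{partial KV2}) and the symmetry in an essential way; the paper's argument needs nothing beyond (\ref{partial KV1}) (the symmetry enters only so that $\psi$ exists). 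Two caveats for your version: (i) in the quadratic case actually established by the paper's theorem, (\ref{partial KV2}) is only known with $\mathrm{tr}$ in place of $\tilde{\mathrm{tr}}$; your argument survives this, because the cyclic word $x^{p-2}y$ is fixed by $a\mapsto a^T$ (the sign is $(-1)^{p-1}=1$ and $yx^{p-2}$ is a cyclic rotation of $x^{p-2}y$), hence remains nonzero in the quotient, but this deserves an explicit remark; (ii) do not test your value of $\delta$ against the paper's printed $p=5$ solution, which is off by an overall sign relative to (\ref{partial KV1}) as stated (its depth-one slice gives $[x,A^{(1)}]=+\mathrm{ad}_x^{4}y$, while the equation forces $-\mathrm{ad}_x^{4}y$), so the mismatch there reflects a flaw in that example, not in your computation.
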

\begin{proof} By definition, we have $\psi(x,y)=A(y,-x-y)$, so we
must prove that $A(y,-x-y)$ is of depth one, i.e., that $A(x,y)=c \,
\mathrm{ad}_y^{p-2}x+...$, where $c\neq0$. In equation (\ref{partial
KV1}), we consider the homogeneous part of degree $p-2$ in $y$:
$$[x,A_{xy^{p-2}}]+[y,B_{x^2y^{p-1}}]=(x^p+y^p-(x+y)^p)_{x^2y^{p-2}},$$
where the indices $x^iy^{p-i}$ denote the corresponding homogeneous
degree parts of the expressions. By the definition (\ref{ch}) of the
Campbell--Hausdorff series, we have
$A_{xy^{p-2}}=c\cdot\mathrm{ad}_y^{p-2}x$ and $B_{x^2y^{p-1}}=\sum
b_{ql}\mathrm{ad}_y^{l_1}\mathrm{ad}_x\mathrm{ad}_y^{l_2}\mathrm{ad}_xy$,
where the sum is taken over all $l_1$ and $l_2$ such that $l_1\geq0$, $l_2\geq0$, and $l_1+l_2=p-2$. \\
Suppose $c=0$. Then
$(x^p+y^p-(x+y)^p)_{x^2y^{p-2}}=[y,B_{x^2y^{p-1}}]$. Consider the
injection $\tau$ to the universal enveloping algebra. The image
under $\tau$ of the right-hand side of the above equation is a sum
of monomials either beginning or ending with $y$, so it does not
contain the monomial $xy^{p-2}x$, whereas the left-hand side of this
equation does contain such a monomial. Thus, $c\neq0$, and so $\psi$
is of depth one.
\end{proof}

 \noindent\textbf{Conjecture 2}\ \ The function $\psi(x,y)$
belongs to $\mathfrak{grt}$. \vskip 1em
We verify this conjecture for $p=3$ and $p=5$. \\ \\
\textit{The case $p=3$}. \\ \\
The solution of (\ref{partial KV1},\ref{partial KV2}) is given by
$A(x,y)=-[x,y]$ and $B(x,y)=[x,y]$. Then $\psi(x,y)=-[x,y]$. We
verify conditions (\ref{grt1}-\ref{grt3}).\\ \\
Condition (\ref{grt1}). We have
$-\psi(y,x)=[y,x]=-[x,y]=\psi(x,y)$.\\ \\
Condition (\ref{grt2}). We have
$\psi(x,y)+\psi(y,-x-y)+\psi(-x-y,x)=
-[x,y]-[y,-x-y]-[-x-y,x]=-[x,y]+[y,x]+[y,x]=-3[x,y]=0\mod 3.$ \\ \\
Condition (\ref{grt3}). We write $(ij)$ for $t^{i,j}$. We have
$\psi((12),(23)+(24))+\psi((13)+(23),(34))-\psi((23),(34))-\psi((12)+(13),(24)+(34))
-\psi((12),(23))=-[(12),(23)+(24)]-[(13)+(23),(34)]+[(23),(34)]+[(12)+(13),(24)+(34)]
+[(12),(23)]=0.$\\ \\
\textit{The case $p=5$}. \\ \\
The solution of (\ref{partial KV1},\ref{partial KV2}) is given by
$A(x,y)=[x,[x,[x,y]]]+[y,[x,[x,y]]]+2[y,[y,[x,y]]]$ and
$B(x,y)=[y,[y,[y,x]]]+[x,[y,[y,x]]]+2[x,[x,[y,x]]]$. Then
$\psi(x,y)=2[x,[x,[x,y]]]+2[y,[y,[x,y]]]+3[y,[x,[x,y]]]$. Let us
verify the conditions (\ref{grt1}-\ref{grt3}).\\ \\
Condition (\ref{grt1}). We have
$-\psi(y,x)=-2[y,[y,[y,x]]]-2[x,[x,[y,x]]]-3[x,[y,[y,x]]]=
2[y,[y,[x,y]]]+2[x,[x,[x,y]]]+3[x,[y,[x,y]]]=\psi(x,y)$.\\ \\
By direct calculation, we obtain that $\psi[x, y] + \psi[y, -x - y]
+
\psi[-x - y, x]=0 \mod 5$, which gives (\ref{grt2}).\\ \\
A lengthy calculation using Lemma (\ref{t4}) gives equation
(\ref{grt3}) modulo 5. \hfill$\Box$ \vskip 1 em The kernel of the
projection
$\pi:\mathfrak{grt}\rightarrow\mathfrak{grt}/[\mathfrak{grt},\mathfrak{grt}]$
contains only the elements of depth greater or equal to 2 (see [6]
for details). Thus, Conjecture 2 together with Lemma \ref{depth}
would give a generator of $\mathfrak{grt}$ in degree $p-1$.

\vskip 2em\textbf{Acknowledgements:} I would like to thank Anton
Alekseev for suggesting the problem and for numerous discussions.

\section*{References}

  \noindent[1] \ Alekseev A., Meinrenken E.,
  \emph{On the Kashiwara--Vergne conjecture}.
  Invent. math. 164, 615–634, (2006).
\vskip 1em
  \noindent[2] \ Alekseev A., Torossian Ch.,
  \emph{The Kashiwara--Vergne conjecture and Drinfeld's
  associators}.
  arXiv:0802.4300
\vskip 1em
  \noindent[3] \ Drinfel'd V.G.,
  \emph{On quasiriangular quasi-Hopf algebras and a group closely connected
  with $\mathrm{Gal}(\bar{\mathbb Q},\mathbb Q)$}.
  Leninrgad Math. J. Vol. 2 (1991), No. 4.
\vskip 1em
  \noindent[4] \ Duflo M.,
  \emph{Op\'erateurs diff\'erentiels bi-invariants sur un groupe de
  Lie}.
  Ann. Sci. \'Ecole Norm. Sup. 10 (1977), 265-288.
\vskip 1em
  \noindent[5] \ Duflo M., private communications.
\vskip 1em
  \noindent[6] \ Ihara Y.,
  \emph{Some arithmetic aspects of Galois actions in the pro-$p$ fundamental group of
  $\mathbb{P}^1-\{0,1,\infty\}$}.
  Arithmetic fundamental groups and noncommutative algebra (Berkley, CA,
  1999), 247-273, Proc. Sympos. Pure Math., 70, Amer. Math. Soc.,
  Providence, RI, 2002.
\vskip 1em
  \noindent[7] \ Ireland K., Rosen M.
  \emph{A classical introduction to modern number theory}.
  New York, Heidelberg, Berlin: Springer-Verlag, 1982.
\vskip 1em
  \noindent[8] \ Kashiwara M., Vergne M.,
  \emph{The Campbell-Haussdorff formula and invariant hyperfunctions}.
  Invent. Math. 47 (1978) 249-272.
\vskip 1em
  \noindent[9] \ Mathieu O.
  \emph{Classification des alg\`{e}bres de Lie simples}.
  S\'{e}minaire N. Bourbaki, 1998-1999, exp. No. 858.
\vskip 1em
  \noindent[10] \ Vergne M.,
  \emph{Le centre de l'alg\`{e}bre enveloppante et la formule de Campbell--Hausdorff}.
  C. R. Acad. Sci. Paris S\'{e}r. I Math. 329 (1999), no. 9, 767-772.

\begin{center}
\vskip 2em \textsc{Maria Podkopaeva}

Universit\'e de Gen\`eve

Section de math\'ematiques

2-4 rue du Li\`evre, CP 64

1211 Gen\`eve 4, Switzerland

Maria.Podkopaeva@unige.ch \end{center}

\end{document}